\def\CC{{\mathbb C}}
\def\QQ{{\mathbb Q}}
\def\PP{{\mathbb P}}
\def\QQ{{\mathbb Q}}
\def\RR{{\mathbb R}}
\def\Qbar{\overline{\mathbb Q}}
\def\0{{\mathbf 0}}
\def\1{{\mathbf 1}}
\def\Ocal{{\mathcal O}}
\def\Kbar{{\bar K}}
\def\Gal{\mathrm{Gal}}
\def\PGL{\mathrm{PGL}}
\def\Berk{\mathrm{Berk}}
\def\supp{\mathrm{supp}}
\title[Energy integrals over local fields]{Energy integrals over local fields \\ and global height bounds}
\author[Fili]{Paul Fili}
\address{Department of Mathematics\\ Oklahoma State University, Stillwater, OK 74078}
\email{fili@post.harvard.edu}
\author[Petsche]{Clayton Petsche}
\address{Department of Mathematics\\ Oregon State University, Corvallis, OR 97331}
\email{petschec@math.oregonstate.edu}
 \subjclass[2010]{11G50, 11R06, 37P30}
 \keywords{Weil height, totally real, totally $p$-adic, splitting conditions, equilibrium measure.}
\date{\today}
\newtheorem{thm}{Theorem}
\newtheorem{cor}[thm]{Corollary}
\newtheorem*{thm*}{Theorem}
\newtheorem*{alg*}{Algorithm}
\newtheorem*{lemma*}{Lemma}
\theoremstyle{remark}
\newtheorem*{rmk*}{Remark}
\newtheorem*{notation*}{Notation}
\newtheorem*{example*}{Example}
\theoremstyle{definition}
\newtheorem*{defn*}{Definition}
\newcommand{\mybf}{\mathbb}
\newcommand{\bP}{\mybf{P}}
\newcommand{\bR}{\mybf{R}}
\newcommand{\bC}{\mybf{C}}
\newcommand{\bN}{\mybf{N}}
\newcommand{\bQ}{\mybf{Q}}
\newcommand{\bF}{\mybf{F}}
\newcommand{\al}{\alpha}
\providecommand{\abs}[1]{\lvert#1\rvert}
\providecommand{\norm}[1]{\lVert#1\rVert}
\newcommand{\ddx}{\frac{d}{dx}}
\newcommand{\ra}{\rightarrow}
\def\talltareesidedbox#1{\setbox0=\hbox{$#1$}\dimen0=\wd0 \advance\dimen0 by3pt\rlap{\hbox{\vrule height10pt width.4pt
 depth2pt \kern-.4pt\vrule height10.4pt width\dimen0 depth-10pt\kern-.4pt \vrule height10pt width.4pt depth2pt}}
 \relax \hbox to\dimen0{\hss$#1$\hss}}
\def\tareesidedbox#1{\setbox0=\hbox{$#1$}\dimen0=\wd0 \advance\dimen0 by3pt\rlap{\hbox{\vrule height8pt width.4pt
 depth2pt \kern-.4pt\vrule height8.4pt width\dimen0 depth-8pt\kern-.4pt \vrule height8pt width.4pt depth2pt}}
\relax \hbox to\dimen0{\hss$#1$\hss}}
\def\shorttareesidedbox#1{\setbox0=\hbox{$#1$}\dimen0=\wd0 \advance\dimen0 by3pt\rlap{\hbox{\vrule height7pt width.4pt
 depth2pt \kern-.4pt\vrule height7.4pt width\dimen0 depth-7pt\kern-.4pt \vrule height7pt width.4pt depth2pt}}
 \relax \hbox to\dimen0{\hss$#1$\hss}}
\begin{document}

\begin{abstract}
 We solve an energy minimization problem for local fields. As an application of these results, we improve on lower bounds set by Bombieri and Zannier for the limit infimum of the Weil height in fields of totally $p$-adic numbers and generalizations thereof. In the case of fields with mixed archimedean and non-archimedean splitting conditions, we are able to combine our bounds with similar bounds at the archimedean places for totally real fields.
\end{abstract}

\thanks{The second author was supported in part by NSF grant DMS-0901147. The authors would like to thank Igor Pritsker for bringing \cite{Simeonov} to our attention.}

\maketitle

\section{Introduction}
\subsection{Local energy minimization and equidistribution}  Let $L$ be a complete field with respect to a nontrivial absolute value $|\cdot|$.  Given a Borel probability measure $\nu$ on $\PP^1(L)$, define the energy integral  
\begin{equation}\label{EnergyIntegral}
I(\nu)=\iint_{\PP^1(L)\times\PP^1(L)}-\log\delta(x,y)\,d\nu(x)\,d\nu(y),
\end{equation}
where $\delta:\PP^1(L)\times\PP^1(L)\to\RR$ is defined by 
\begin{equation*}
\delta(x,y) =\frac{|x_0y_1-y_0x_1|}{\max\{|x_0|,|x_1|\}\max\{|y_0|,|y_1|\}}
\end{equation*}
for $x=(x_0:x_1)$ and $ y=(y_0:y_1)$ in $\PP^1(L)$.  The function $\delta:\PP^1(L)\times\PP^1(L)\to\RR$ is symmetric, nonnegative, continuous, and vanishes precisely along the diagonal $\Delta$ of $\PP^1(L)\times\PP^1(L)$, and thus $-\log\delta(x,y)$ is a reasonable choice for a potential kernel on $\PP^1(L)$.  In the non-archimedean case, $\delta(x,y)$ is the standard projective metric on $\PP^1(L)$, and in both the non-archimedean and archimedean cases, $-\log\delta(x,y)$ may be viewed as a continuously varying family of Weil local height functions.

The first main result of this paper is the solution to the minimization problem for the energy integral $(\ref{EnergyIntegral})$ in the case where the field $L$ is locally compact but not algebraically closed.

\begin{thm}\label{MainThmLocal}
Let $L$ be a non-algebraically-closed (equivalently, non-complex) locally compact field with respect to a nontrivial absolute value $|\cdot|$, which is normalized to coincide with the modulus of additive Haar measure on $L$.  
\begin{itemize}
	\item[{\bf (a)}] There exists a unique Borel probability measure $\mu_L$ on $\PP^1(L)$ such that $I(\nu)\geq I(\mu_L)$ for all Borel probability measures $\nu$ on $\PP^1(L)$.  If $\{\nu_n\}$ is a sequence of Borel probability measures on $\PP^1(L)$ with $I(\nu_n)\to I(\mu_L)$, then $\nu_n\to\mu_{L}$ weakly.    
	\item[{\bf (b)}]  In the real case, the measure $\mu_\RR$ of minimal energy is given explicitly by
\begin{equation*}
\mu_\RR(x)=\frac{1}{\pi^2x}\log\bigg|\frac{x+1}{x-1}\bigg| \,dx
\end{equation*}
and the minimal energy is
\begin{equation*}
I(\mu_\RR)=\frac{7\zeta(3)}{2\pi^2}=0.426278\ldots 
\end{equation*}
where $\zeta(3)=\sum_{n\geq1}n^{-3}$.
	\item[{\bf (c)}]  In the non-archimedean case, the measure $\mu_L$ of minimal energy is the unique $\PGL_2(\Ocal_L)$-invariant Borel probability measure on $\PP^1(L)$, and the minimal energy is
\begin{equation*}
I(\mu_L)=\frac{q\log q}{q^2-1}
\end{equation*}
where $q$ denotes the number of elements in the residue field of $L$.
\end{itemize}
\end{thm}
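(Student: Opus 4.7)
My plan is to treat part~(a) by the abstract framework of potential theory on a compact space, and then to identify the minimizer explicitly in each of the two remaining cases.

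For part~(a), since $L$ is locally compact and non-discrete, $\PP^1(L)$ is compact, so the set $\Mcal(\PP^1(L))$ of Borel probability measures is sequentially compact in the weak topology. The kernel $-\log\delta$ is lower semicontinuous and bounded below (as $\delta \leq 1$), and hence $I:\Mcal(\PP^1(L))\to[0,+\infty]$ is lower semicontinuous; existence of a minimizer is immediate. For uniqueness and weak convergence of minimizing sequences, I would prove strict conditional positive-definiteness of the kernel: $\iint -\log\delta\,d\sigma\,d\sigma > 0$ for every nonzero signed Borel measure $\sigma$ of total mass zero with finite energy. Using the decomposition
\[
-\log\delta(x,y) = -\log|x_0y_1 - y_0x_1| + \log\max\{|x_0|,|x_1|\} + \log\max\{|y_0|,|y_1|\},
\]
the two ``additive'' terms contribute zero against $\sigma \times \sigma$, reducing the claim to positive-definiteness of the logarithmic kernel, which is classical over $\RR$ via Fourier analysis and follows from analysis on the Bruhat--Tits tree in the non-archimedean case. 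Strict convexity of $I$ then gives uniqueness and the asserted weak convergence.

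For part~(c), the kernel $-\log\delta$ is invariant under the diagonal action of $\PGL_2(\Ocal_L)$, which is compact and acts transitively on $\PP^1(L)$. Consequently $I$ is $\PGL_2(\Ocal_L)$-invariant and, by uniqueness from~(a), the minimizer $\mu_L$ must be the unique $\PGL_2(\Ocal_L)$-invariant probability measure on $\PP^1(L)$. I would exhibit it in affine coordinate $y$ on $\PP^1(L)\setminus\{\infty\}\cong L$ as
\[
d\mu_L(y) = \frac{q}{q+1}\cdot\frac{dy}{\max\{|y|,1\}^2},
\]
with $dy$ the Haar measure giving $\Ocal_L$ mass one, and verify invariance under the generators $y\mapsto y+a$ ($a\in\Ocal_L$), $y\mapsto uy$ ($u\in\Ocal_L^\times$), and $y\mapsto 1/y$. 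The energy is then computed by invariance: $I(\mu_L) = \int -\log\delta((1{:}0),y)\,d\mu_L(y) = \int\log\max\{|y|,1\}\,d\mu_L(y)$, and decomposing the integral over the shells $|y|=q^k$ with $k \geq 1$ yields the geometric-type sum $(q-1)\log q \sum_{k\geq 1} k q^{-k-1} = \log q/(q-1)$, hence $I(\mu_L) = q\log q/(q^2-1)$.

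For part~(b), the plan is to verify the explicit formula via the variational criterion implicit in part~(a): a probability measure whose potential $U^\nu(x) := -\int\log\delta(x,y)\,d\nu(y)$ is constant on $\PP^1(\RR)$ must be the unique minimizer. After checking that $\mu_\RR$ is a probability measure, the main task (and the main obstacle) is to verify constancy of $U^{\mu_\RR}$, i.e., that
\[
\int_\RR \log|x-y|\cdot\frac{1}{\pi^2 y}\log\Bigl|\frac{y+1}{y-1}\Bigr|\,dy - \log\max\{|x|,1\}
\]
is independent of $x\in\RR$. One natural approach is to write $\log|(y+1)/(y-1)| = \mathrm{p.v.}\int_{-1}^{1} dt/(y-t)$, interchange the order of integration, and reduce to standard Hilbert transform identities; alternatively, one could follow Simeonov's method cited in the acknowledgments. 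Once constancy is established, evaluating $U^{\mu_\RR}$ at a convenient point such as $x=\infty$ and recognizing the resulting series as $\sum_{n\geq 0}(2n+1)^{-3} = 7\zeta(3)/8$ produces the stated value $I(\mu_\RR) = 7\zeta(3)/(2\pi^2)$.
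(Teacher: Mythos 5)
Your plan reaches the same results, but part (a) takes a genuinely different route from the paper, while (b) and (c) essentially coincide with it. For (a), the paper does not prove positive-definiteness itself: it quotes Frostman's theorem twice, in the archimedean case from Simeonov's weighted potential theory with the weight $w(z)=1/\max\{\abs{z},1\}$, and in the non-archimedean case from Baker--Rumely's Berkovich-space theory after observing that $\delta$ is the Hsia kernel relative to the Gauss point $\zeta$ and that $\PP^1(L)$ is a compact subset of $\PP^1_{\Berk}\setminus\{\zeta\}$; the minimizing-sequence statement is then the same compactness plus lower-semicontinuity argument you give. Your alternative --- existence by weak-* compactness and lower semicontinuity, and uniqueness plus convergence via strict conditional positive-definiteness of the kernel for mass-zero signed measures after splitting off the separable terms $\log\max\{|x_0|,|x_1|\}$ --- is a standard and workable substitute that avoids both Berkovich machinery and the weighted Frostman theorem, at the cost of proving the energy inequality yourself (classical over $\RR$; via the decomposition into balls or the tree in the non-archimedean case) and of some care at the point at infinity, where the affine reduction to the kernel $-\log|x-y|$ breaks down; the same positive-definiteness also cleanly yields the variational criterion ``constant potential off a polar set implies equilibrium measure'' that you use in (b), which the paper instead derives from its Frostman and Tsuji-style theorems. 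In (c) you identify $\mu_L$ as the unique $\PGL_2(\Ocal_L)$-invariant measure exactly as the paper does, except that you appeal to transitivity of the compact group action for uniqueness where the paper argues via constancy of the potential, and you compute the energy from the explicit density $\frac{q}{q+1}\max\{\abs{y},1\}^{-2}dy$ rather than from ball measures; both computations agree. In (b) your strategy (mass one, constancy of the potential via Hilbert-transform identities, evaluation at one point) is the paper's.

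Two small corrections. First, $\delta\leq 1$ is false in the real case (e.g.\ $\delta((1{:}1),(1{:}-1))=2$), so $I$ takes values in $[-\log 2,\infty]$ rather than $[0,\infty]$; this is harmless since boundedness below is all your argument needs, but the parenthetical should go. Second, evaluating the constant at $x=\infty$ needs a word of justification, since any constancy argument of the proposed type will a priori exclude $\infty$ (and $\pm1$) from the set where the potential is known to equal the Robin constant; the inversion symmetry $p_{\mu_\RR}(x)=p_{\mu_\RR}(1/x)$, which the paper also uses, converts your evaluation at $\infty$ into the paper's evaluation at $0$, and with that remark your series computation (giving $\frac{4}{\pi^2}\sum_{n\geq0}(2n+1)^{-3}=\frac{7\zeta(3)}{2\pi^2}$) is correct and in fact slightly cleaner than the paper's beta-function manipulation.
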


Our inspiration for Theorem~\ref{MainThmLocal} comes from the solution in the algebraically closed setting, which is due to M. Baker and R. Rumely.  First, when $L=\CC$, the relevant result is implicit in Rumely \cite{RumelyArticle}, and in a formulation closer to the one considered here in Baker-Rumely\footnote{In fact, the results of Baker-Rumely hold, in both the complex and non-archimedean cases, for a much more general class of potential kernels arising from the study of the dynamics of rational maps on $\PP^1$.  For our purposes it is enough to confine our discussion of their results to this special case.} \cite[\S 3.6]{BakerRumely}.  The unique Borel probability measure $\mu_\CC$ on $\PP^1(\CC)$ minimizing the energy integral $(\ref{EnergyIntegral})$ is the normalized Haar measure supported on the unit circle of $\CC$, and the minimal energy $I(\mu_\CC)$ is zero.  Among other things, Rumely's result in \cite{RumelyArticle} provides a new potential-theoretic proof of the Bilu equidistribution theorem \cite{Bilu} on global points of small Weil height.

In \cite{BakerRumely} (and \cite[\S 10.2]{BakerRumelyBook}), Baker-Rumely also treat the case in which $L$ is non-archimedean and algebraically closed. In the non-archimedean setting, if $L$ is algebraically closed it follows that $\PP^1(L)$ is not compact, which is problematic from the point of view of minimizing the energy integral $(\ref{EnergyIntegral})$.  Indeed, while it follows from the nonnegativity of $-\log\delta(x,y)$ in the non-archimedean setting that $I(\nu)\geq0$ for all Borel probability measures $\nu$ on $\PP^1(L)$, it may also be shown when $L=\bC_p$ that no case of equality exists, and one can find sequences $\{\nu_n\}$ of Borel probability measures on $\PP^1(L)$, possessing no weak limit, but for which $I(\nu_n)\to0$.  Baker-Rumely overcome this difficulty by passing from the ordinary projective line $\PP^1(L)$ to the (compact) Berkovich projective line $\PP^1_{\Berk}(L)$, and they show that the unique Borel probability measure $\mu_L$ on $\PP^1_{\Berk}(L)$ of minimal energy is the Dirac measure supported at the Gauss point $\zeta$ of $\PP^1_{\Berk}(L)$.

Strictly speaking, Theorem~\ref{MainThmLocal} holds vacuously for measures $\nu$ charging any single point of $\PP^1(L)$, because any such measure has energy $I(\nu)=+\infty$, and thus the hypothesis $I(\nu_n)\to I(\mu_L)$ cannot be satisfied along sequences of such measures.  However, with some care it is possible to obtain a nontrivial discrete analogue of Theorem~\ref{MainThmLocal} as a corollary.  Given a finite subset $Z$ of $\PP^1(L)$ with $|Z|=N$, define the {\em energy sum} (or {\em discrepancy}) of the set $Z$ by
\[
D(Z)=\frac{1}{N(N-1)}\sum_{\stackrel{\alpha,\beta\in Z}{\alpha \neq \beta}}-\log\delta(\alpha,\beta).
\]

\begin{cor}\label{DiscreteCor}
Let $L$ be a non-algebraically-closed (equivalently, non-complex) locally compact field with respect to a nontrivial absolute value $|\cdot|$.  If $\{Z_n\}$ is a sequence of finite subsets of $\PP^1(L)$ with $|Z_n|\to\infty$, then 
\begin{equation}\label{DiscreteLimInf}
\liminf_{n\to\infty}D(Z_n)\geq I(\mu_L).
\end{equation}
Moreover, if $D(Z_n)\to I(\mu_L)$, then the sequence $\{Z_n\}$ is $\mu_L$-equidistributed in the sense that $[Z_n]\to\mu_L$ weakly, where $[Z_n]$ denotes the unit Borel measure on $\PP^1(L)$ supported equally on the points of $Z_n$.
\end{cor}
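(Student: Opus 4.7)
The strategy is to bound $D(Z_n)$ below by the integral of a truncated version of $-\log\delta$ against the normalized counting measure, pass to a weak subsequential limit using compactness of $\PP^1(L)$, and invoke Theorem~\ref{MainThmLocal}.

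Write $N_n=|Z_n|$ and let $[Z_n]$ denote the normalized counting measure on $Z_n$. For each $M>0$ define the truncated kernel
\[
k_M(x,y)\;:=\;\min\bigl\{M,\,-\log\delta(x,y)\bigr\}.
\]
Since $-\log\delta(x,y)\to+\infty$ as $(x,y)$ approaches the diagonal, $k_M$ is continuous and bounded on the compact product $\PP^1(L)\times\PP^1(L)$. Separating the diagonal and off-diagonal contributions yields the basic inequality
\[
\iint k_M\,d[Z_n]\,d[Z_n]\;=\;\frac{M}{N_n}+\frac{N_n-1}{N_n}\cdot\frac{1}{N_n(N_n-1)}\sum_{\alpha\neq\beta}k_M(\alpha,\beta)\;\leq\;\frac{M}{N_n}+D(Z_n).
\]

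First I would establish \eqref{DiscreteLimInf} by contradiction: assuming $\liminf D(Z_n)=c<I(\mu_L)$ along some subsequence, use that the space of Borel probability measures on the compact space $\PP^1(L)$ is itself weakly compact to pass to a further subsequence along which $[Z_n]\to\nu$ weakly. Continuity and boundedness of $k_M$ together with the displayed inequality above give $\iint k_M\,d\nu\,d\nu\leq c$ for every $M>0$. Because $\delta$ is bounded above on $\PP^1(L)\times\PP^1(L)$, the kernel $-\log\delta$ is bounded below, so monotone convergence as $M\to\infty$ yields $I(\nu)\leq c<I(\mu_L)$, contradicting Theorem~\ref{MainThmLocal}(a).

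For the equidistribution statement, assume $D(Z_n)\to I(\mu_L)$. Any subsequence of $\{[Z_n]\}$ admits a weakly convergent further subsequence with some limit $\nu$; the truncation-and-limit argument just given shows $I(\nu)\leq I(\mu_L)$, while the minimality clause of Theorem~\ref{MainThmLocal}(a) forces $I(\nu)\geq I(\mu_L)$, so $I(\nu)=I(\mu_L)$, and the uniqueness clause forces $\nu=\mu_L$. Since every subsequence of $\{[Z_n]\}$ has a further subsequence converging weakly to $\mu_L$, the full sequence converges weakly to $\mu_L$. I do not anticipate any substantive obstacle; the only delicate feature is the diagonal singularity of $-\log\delta$, cleanly absorbed into the error term $M/N_n\to0$ in the displayed inequality, which is precisely where the hypothesis $|Z_n|\to\infty$ enters.
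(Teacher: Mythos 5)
Your argument is correct and follows the same skeleton as the paper's proof: pass to a weak-$*$ convergent subsequence of the counting measures via Prokhorov's theorem, show that the limit measure $\nu$ satisfies $I(\nu)\leq\lim_k D(Z_{n_k})$, and then invoke the minimality and uniqueness parts of Theorem~\ref{MainThmLocal}(a), finishing the equidistribution claim by the standard ``every subsequence has a further subsequence converging to $\mu_L$'' device. The one genuine difference is the middle step: the paper simply cites \cite[Lemma 7.54]{BakerRumelyBook} for the inequality $I(\nu)\leq\liminf_k D(Z_{n_k})$, whereas you reprove it from scratch with the truncated kernel $k_M=\min\{M,-\log\delta\}$; this is essentially the standard proof of that lemma, so your write-up is more self-contained at the cost of some bookkeeping. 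One small slip in that bookkeeping: the step $\frac{1}{N_n^2}\sum_{\alpha\neq\beta}k_M(\alpha,\beta)\leq D(Z_n)$ implicitly uses $D(Z_n)\geq 0$, which is automatic in the non-archimedean case but can fail at the real place, where $-\log\delta$ can be negative (e.g.\ $Z=\{1,-1\}$ has $D(Z)=-\log 2$). The fix is immediate: retain the factor $\frac{N_n-1}{N_n}$ and bound the double integral by $\frac{M}{N_n}+\frac{N_n-1}{N_n}D(Z_n)$, which still tends to the limiting value $c$ of $D(Z_{n_k})$ since $\frac{N_n-1}{N_n}\to 1$; the conclusion $\iint k_M\,d\nu\,d\nu\leq c$, and hence the rest of your argument, is unaffected.
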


\subsection{Global height bounds}
The most obvious difference between the locally compact, non-complex setting of Theorem~\ref{MainThmLocal} and Corollary~\ref{DiscreteCor}, and the results of Baker-Rumely in the algebraically closed case, is that the minimal energies are positive in the former case, and zero in the latter.  The second main result of this paper makes use of this positivity to obtain lower bounds on the usual absolute Weil height $h:\Kbar\to\RR$ for algebraic numbers over a number field $K$ satisfying certain splitting conditions.  This connection is made possible by the dual role of the function $-\log\delta(x,y)$ as the potential kernel for the energy integral $\eqref{EnergyIntegral}$ on the one hand, and as a continuously varying family of Weil local height functions on the other hand.

The following theorem illustrates our main global result in the simplest special case of algebraic numbers over $\QQ$ with mixed splitting conditions.  Denote by $M_\QQ=\{\infty,2,3,5,\dots\}$ the set of places of $\QQ$.  An algebraic number $\alpha\in \Qbar$ is said to be {\em totally real} (resp. {\em totally $p$-adic}) if its minimal polynomial over $\QQ$ splits completely over $\RR$ (resp. $\QQ_p$). 

\begin{thm}\label{SimpleGlobalThm}
Let $S$ be a nonempty subset of $M_\QQ$, and let $L_S$ be the subfield of $\Qbar$ consisting of all those $\alpha\in\Qbar$ such that $\alpha$ is totally $p$-adic for all primes $p\in S$, and $\alpha$ is totally real if $\infty\in S$.  Then
\begin{equation}\label{SimpleGlobalThmBound}
\liminf_{\alpha\in L_S}h(\alpha)\geq
\begin{cases}
\displaystyle \frac{1}{2} \sum_{p\in S} \frac{p\log p}{p^2-1} & \text{if }\infty\not\in S\\
\displaystyle \frac{1}{2} \sum_{\substack{p\in S\\ p\nmid\infty}} \frac{p\log p}{p^2-1}  + \frac{7\zeta(3)}{4\pi^2} & \text{if }\infty\in S
\end{cases}
\end{equation}
\end{thm}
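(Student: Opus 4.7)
The plan is to reduce the global bound to the local minimization results of Theorem~\ref{MainThmLocal} and Corollary~\ref{DiscreteCor} through a classical identity expressing the Weil height of $\alpha\in\Qbar$ as a sum over places of pairwise discrepancies on its Galois orbit. Concretely, for $\alpha\in\Qbar$ of degree $d\ge 2$ with orbit $O_\alpha=\{\alpha_1,\dots,\alpha_d\}$ and a fixed embedding $\Qbar\hookrightarrow\overline{\QQ_p}$ at each place $p$ (with $\QQ_\infty:=\RR$), the first step is to derive
\[
h(\alpha)=\frac{1}{2}\sum_{p\in M_\QQ}D_p(O_\alpha),
\]
where $D_p$ is the discrepancy of Corollary~\ref{DiscreteCor} computed in $\QQ_p$. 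This follows from the formula
\[
-\log\delta_p(\alpha_i,\alpha_j)=-\log|\alpha_i-\alpha_j|_p+\log^+|\alpha_i|_p+\log^+|\alpha_j|_p
\]
summed over ordered pairs $i\ne j$ and over $p$, combined with two ingredients: the product formula applied to the rational number $\gamma=\prod_{i\ne j}(\alpha_i-\alpha_j)\in\QQ^\times$ (rational because the product is invariant under the $G_\QQ$-action permuting the conjugates) to eliminate the difference sum, and the standard expression $h(\alpha)=\frac{1}{d}\sum_p\sum_i\log^+|\alpha_i|_p$ to collect the remainder into $2d(d-1)h(\alpha)$.

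\medskip

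Next I would take any sequence $\{\alpha_n\}\subset L_S$ with $h(\alpha_n)\to\liminf_{\alpha\in L_S}h(\alpha)$; if this liminf is finite, Northcott's theorem forces $d_n=\deg\alpha_n\to\infty$, which activates Corollary~\ref{DiscreteCor}. For each finite $p\in S$ the totally $p$-adic hypothesis gives $O_{\alpha_n}\subset\QQ_p$ and the corollary yields $\liminf_n D_p(O_{\alpha_n})\ge p\log p/(p^2-1)$; when $\infty\in S$, the totally real hypothesis gives $O_{\alpha_n}\subset\RR$ and the corollary yields $\liminf_n D_\infty(O_{\alpha_n})\ge 7\zeta(3)/(2\pi^2)$. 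At every finite $p\notin S$ the ultrametric inequality gives $\delta_p\le 1$ and hence $D_p(O_{\alpha_n})\ge 0$ termwise; at $\infty\notin S$, the complex case of the equilibrium problem (Rumely / Baker--Rumely) gives $\liminf_n D_\infty(O_{\alpha_n})\ge I(\mu_\CC)=0$. Dropping the nonnegative terms from the identity, applying superadditivity of $\liminf$ to the finite remaining sum, and dividing by $2$ then yields~\eqref{SimpleGlobalThmBound}.

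\medskip

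The main work is in the height--energy identity of the first paragraph: one must keep the local normalizations compatible so that the discrepancy $D_p$ matches the local factor of $h$, and verify that $\gamma$ is genuinely rational rather than merely algebraic. The main conceptual subtlety lies at the archimedean place when $\infty\notin S$, which is outside the scope of Corollary~\ref{DiscreteCor}; because $-\log\delta_\infty$ is not everywhere nonnegative one cannot bound $D_\infty(O_{\alpha_n})$ termwise, and the needed asymptotic nonnegativity must be imported from the complex case of the equilibrium theory.
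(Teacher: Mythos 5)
Your argument is correct, and its skeleton is the same as the paper's (which proves the more general Theorem~\ref{GeneralGlobalThm} and obtains Theorem~\ref{SimpleGlobalThm} as a special case): decompose $h(\alpha)$ as $\tfrac12\sum_p D_p$ using the product formula applied to the Galois-invariant (hence rational, and nonzero by separability) product $\prod_{i\neq j}(\alpha_i-\alpha_j)$ together with $h(\alpha)=\tfrac1d\sum_p\sum_i\log^+|\alpha_i|_p$; invoke Northcott to force degrees to infinity; apply Corollary~\ref{DiscreteCor} at each place of $S$; and use termwise nonnegativity of $-\log\delta_p$ at finite places outside $S$. The one genuine divergence is the archimedean place when $\infty\notin S$: the paper controls it with Mahler's discriminant inequality, which gives the explicit bound $D_\infty(\alpha_k)\geq -\log n_k/(n_k-1)\to 0$, whereas you import the qualitative statement $\liminf_n D_\infty\geq I(\mu_\CC)=0$ from the complex equilibrium problem. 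That is legitimate, but be aware it is not covered by Corollary~\ref{DiscreteCor} as stated (which excludes algebraically closed $L$), so you must either cite the Rumely/Baker--Rumely complex-case results (or the classical Fekete transfinite-diameter argument, which works verbatim since $\PP^1(\CC)$ is compact and the kernel is lower semicontinuous and bounded below) or reprove the discrete lower bound there; the paper's route via Mahler is more elementary and quantitative, yours is more uniform with the rest of the argument. Two small points to patch: your final step speaks of a ``finite remaining sum,'' but $S$ may be infinite, so as in the paper one should first prove the bound for finite subsets $S'\subseteq S$ (noting $L_S\subseteq L_{S'}$) and then take a supremum; and one should say a word that the liminf may be assumed finite, since otherwise there is nothing to prove.
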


In all cases except $S=\{\infty\}$, the lower bound in Theorem~\ref{SimpleGlobalThm} is the best currently known.  Our result was inspired in part by work of Bombieri-Zannier \cite{BombieriZannierNote}, who used an elementary counting argument to study the distribution of the $\Gal(\Qbar/\QQ)$-conjugates of $\alpha$ in the residue classes modulo primes lying above $p$.  They treat only the case $\infty\notin S$, and in the same setting as Theorem~\ref{SimpleGlobalThm} their bound is 
\begin{equation*}
\liminf_{\alpha\in L_S}h(\alpha)\geq\frac{1}{2} \sum_{p\in S} \frac{\log p}{p+1},
\end{equation*}
which is slightly worse than Theorem~\ref{SimpleGlobalThm}.  While very similar in spirit to the approach of Bombieri-Zannier, our potential-theoretic approach allows us to treat the real place on equal footing with the non-archimedean places as well as leading to an improvement in the bound at each finite prime $p\in S$.

In the case $S=\{\infty\}$, it was shown by Schinzel that $$
h(\alpha)\geq\frac{1}{2}\log\bigg(\frac{1+\sqrt{5}}{2}\bigg)=0.24061...
$$ 
whenever $\alpha\in\Qbar\setminus\{0,\pm1\}$ is totally real, and this bound is better than then right-hand-side of $(\ref{SimpleGlobalThmBound})$, which is $7\zeta(3)/4\pi^2=0.21314...$. 

While our archimedean result is weaker than this global bound, it has the advantage that it can be used in conjunction with similar restrictions on splitting at the non-archimedean primes, and thus we can achieve bounds in the cases of mixed archimedean and non-archimedean splitting conditions.  This is illustrated in the following example.  

\begin{example*}
Let $S=\{2,\infty\}$, so that in the notation of the theorem, $L_S$ is the field of all numbers which are both totally $2$-adic and totally real. Then Theorem \ref{SimpleGlobalThm} implies that
\[
 \liminf_{\al\in L_S} h(\al) \geq \frac{1}{2}\cdot\frac{2\log 2}{2^2-1} + \frac{7\zeta(3)}{4\pi^2} = 0.231049\ldots + 0.213139\ldots = 0.444188\ldots 
\]
In contrast, if we had applied the Bombieri-Zannier bound separately at $p=2$, we could say that
\[
 \liminf_{\al\in L_S} h(\al)\geq \frac{1}{2} \cdot \frac{\log 2}{2+1} = 0.115525\ldots
\]
and if we had applied the bound of Schinzel for totally real numbers, we could say that
\[
  \liminf_{\al\in L_S} h(\al)\geq \frac{1}{2}\log\bigg(\frac{1+\sqrt{5}}{2}\bigg)=0.24061...
\]
\end{example*}

We note that a similar potential-theoretic approach was taken by the first author in \cite{F-tot-p} to analyze the case of totally $p$-adic {\em algebraic integers}, arriving at the bound of
\begin{equation*}
\liminf_{\alpha\in \Ocal_{L_S}}h(\alpha)\geq\frac{1}{2} \sum_{p\in S} \frac{\log p}{p-1},
\end{equation*}
where $\Ocal_{L_S}$ denotes the ring of algebraic integers of $L_S$.

Theorem \ref{SimpleGlobalThm} and Corollary \ref{DiscreteCor} together imply the following global equidistribution result:
\begin{cor}\label{cor:global-equid}
 Let $S$ be a nonempty subset of $M_\QQ$, and let $L_S$ be the subfield of $\Qbar$ consisting of all those $\alpha\in\Qbar$ such that $\alpha$ is totally $p$-adic for all primes $p\in S$, and $\alpha$ is totally real if $\infty\in S$.  If $\al_n\in L_S$ is a sequence such that
\begin{equation*}
\lim_{n\ra\infty }h(\alpha_n)=
\frac{1}{2} \sum_{\substack{p\in S\\ p\nmid\infty}} \frac{p\log p}{p^2-1}  + \sum_{\substack{p\in S\\ p\mid\infty}} \frac{7\zeta(3)}{4\pi^2},
\end{equation*}
then for each $p\in S$, the probability measures $[\al_n]$ supported equally on each Galois conjugates of $\al_n$ converge weakly to the measure $\mu_{\bQ_p}$.
\end{cor}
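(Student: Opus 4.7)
The strategy is to decompose the global height of $\alpha_n$ into a sum of local energy discrepancies via the product formula, and then apply Corollary~\ref{DiscreteCor} place by place. Specifically, for any $\alpha \in \Qbar$ of degree $d$ with Galois orbit $Z = \{\alpha_1, \dots, \alpha_d\}$, viewed in $\PP^1(\Qbar)$ via $\alpha_i \mapsto (\alpha_i : 1)$, I would first establish the key identity
\[
2h(\alpha) = \sum_{v \in M_\QQ} D_v(Z).
\]
This follows from the expansion $-\log \delta_v((\alpha_i:1),(\alpha_j:1)) = \log^+ |\alpha_i|_v + \log^+|\alpha_j|_v - \log|\alpha_i - \alpha_j|_v$, summing over pairs $i \neq j$ and all places $v$, and invoking the product formula $\sum_v \log|\alpha_i - \alpha_j|_v = 0$ (for each fixed $i \neq j$) to kill the cross terms. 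What survives is $2(d-1)\sum_i \log^+|\alpha_i|_v$ at each $v$, which summed over $v$ gives $2d(d-1)h(\alpha)$; division by $d(d-1)$ yields the identity.

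Next, I would estimate each local term. For each $v \in S$, the splitting hypothesis forces $Z_n \subset \PP^1(L_v)$, where $L_v = \RR$ if $v = \infty$ and $L_v = \QQ_p$ if $v = p$ is finite; since $L_v$ is locally compact and not algebraically closed, Corollary~\ref{DiscreteCor} gives $\liminf_n D_v(Z_n) \geq I(\mu_{L_v})$. For $v \notin S$ the non-archimedean places contribute $D_v(Z_n) \geq 0$ trivially from the nonnegativity of $-\log\delta_v$, while the archimedean place (if $\infty \notin S$) satisfies $\liminf_n D_\infty(Z_n) \geq 0$ by the Baker-Rumely result on $\PP^1(\CC)$. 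A crude uniform pointwise bound (for instance, $D_v(Z_n) \geq -\log 2$ at archimedean places and $D_v(Z_n) \geq 0$ non-archimedean) ensures that Fatou's lemma applies to the sum over $v$.

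Finally, combining with the hypothesis $h(\alpha_n) \to H := \tfrac12 \sum_{v \in S} I(\mu_{L_v})$ produces the squeeze
\[
2H = \lim_n \sum_v D_v(Z_n) \geq \sum_v \liminf_n D_v(Z_n) \geq \sum_{v \in S} I(\mu_{L_v}) = 2H,
\]
forcing $\liminf_n D_v(Z_n) = I(\mu_{L_v})$ for every $v \in S$. Running the same argument along arbitrary subsequences upgrades the liminf to a genuine limit, and the equidistribution half of Corollary~\ref{DiscreteCor} then delivers the desired weak convergence $[\alpha_n] \to \mu_{L_v}$ on $\PP^1(L_v)$ for each $v \in S$. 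The main obstacle is the second step: pinning down the correct lower bounds on $D_v(Z_n)$ at the off-$S$ archimedean place and justifying the Fatou interchange across the a priori infinite sum of places. Once this is arranged, the squeeze argument and the subsequence upgrade are routine.
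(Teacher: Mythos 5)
Your proposal is correct and follows essentially the same route as the paper: the paper's proof of Corollary \ref{cor:global-equid} simply re-runs the estimates from the proof of Theorem \ref{GeneralGlobalThm}, namely the product-formula identity $2h(\alpha)=\sum_v D_v(Z)$, the lower bound $\liminf_n D_v(Z_n)\geq I(\mu_{L_v})$ from Corollary \ref{DiscreteCor} at each $v\in S$, nonnegativity at the finite places outside $S$, and the same squeeze/subsequence argument feeding back into the equidistribution half of Corollary \ref{DiscreteCor}. The one place you diverge is the archimedean place when $\infty\notin S$: the paper bounds $D_\infty(Z_n)\geq -\log n_k/(n_k-1)$ by Mahler's inequality, with Northcott's theorem (bounded height) supplying $n_k\to\infty$, whereas you invoke the Baker--Rumely fact that the minimal energy on $\PP^1(\CC)$ is zero; note that Corollary \ref{DiscreteCor} as stated excludes algebraically closed fields, so your step requires redoing its discrete-to-continuous comparison over $\CC$ (Prokhorov plus lower semicontinuity of the kernel, which is bounded below by $-\log 2$) rather than citing it directly---this works, but Mahler's inequality is the shorter route and is what the paper uses. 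You should also make explicit that $|Z_n|\to\infty$ (via Northcott, since the heights converge and hence are bounded), as this hypothesis is needed both for your complex-place bound and for every application of Corollary \ref{DiscreteCor} at the places of $S$; with that noted, the Fatou-type interchange is immediate since only the single archimedean term can be negative and it is uniformly bounded below.
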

Essentially, this result states that if such a sequence exists, then the $p$-adic conjugates of each $\al_n$ must equidistribute along $\bP^1(\bQ_p)$ for each $p\in S$ according the measure determined in Theorem \ref{MainThmLocal}. We note that as a coarse but interesting corollary of this result, the conjugates of each $\al_n$ must also distribute uniformly amongst the residue classes of $\bP^1(\bF_p)$ for each $p\in S$. It is worth observing that if $S=\{\infty\}$, then the above theorem is vacuous, as by Schinzel's result the lower bound is not attained. In all other cases, however, it is unknown if the lower bound on the height is achieved. We remark that the bound in Theorem \ref{SimpleGlobalThm} is of the correct order of magnitude, as it follows from a more general result in \cite[Theorem 2]{F-tot-p} that, for $S$ which do not include $\infty$, we have
\[
 \liminf_{\al\in L_S} h(\al) \leq \sum_{p\in S} \frac{\log p}{p-1}.
\]

\section{Local energy minimization and equidistribution}

\subsection{Archimedean preliminaries}
In this section we will prove the basic potential theoretic results for the $-\log \delta(x,y)$ kernel in the case where $L$ is archimedean and non-complex, that is, when $L=\bR$. As it makes no difference in our proofs, we will prove these results for arbitrary closed subsets of $\bP^1(\bC)$. Let $\abs{\cdot}$ be the usual absolute value on $\bC$, and for a Borel probability measure $\mu$ on $\bP^1(\bC)$, define the \emph{energy} to be
\begin{equation}
 I(\mu) = \iint_{\bP^1(\bC)^2} -\log \delta(x,y)\,d\mu(x)\,d\mu(y).
\end{equation}
When $\mu$ is supported on $L=\bR$, this is the same energy integral defined in \eqref{EnergyIntegral}. We can associate a potential function to $\mu$ by:
\begin{equation}
  p_\mu(x) = \int_{\bP^1(\bC)} -\log \delta(x,y)\,d\mu(y).
\end{equation}
Notice that $-\log 2 \leq -\log \delta(x,y)\leq \infty$ for all $x,y\in \bP^1(\bC)$, so the energy $I(\mu)$ always exists as a value in $[-\log 2,\infty]$.

We will begin by proving the standard results of potential theory for the energy kernel in this setting.
\begin{thm}[Maria's theorem]
 Let $\mu$ be a Borel probability measure on $\bP^1(\bC)$ of finite energy and let $\mu$ be supported on the closed set $E\subset \bP^1(\bC)$. If the potential satisfies
 \[
  p_\mu(x) \leq M<\infty \quad\text{for all}\quad x\in E,
 \]
 then in fact
 \[
 p_\mu(x)\leq M \quad\text{for all}\quad x\in \bP^1(\bC).
 \]
\end{thm}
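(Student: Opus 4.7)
The plan is to argue by contradiction, combining the lower semi-continuity of $p_\mu$ with the maximum principle for sub-harmonic functions on $\PP^1(\CC) \setminus E$.

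Two basic regularity facts set up the argument. First, since the kernel $-\log\delta(x,y)$ is lower semi-continuous on $\PP^1(\CC)\times\PP^1(\CC)$ and bounded below by $-\log 2$, Fatou's lemma implies that $p_\mu$ is LSC on $\PP^1(\CC)$; in particular, each super-level set $\{p_\mu > c\}$ is open. Second, in affine coordinates the kernel admits the decomposition
\begin{equation*}
-\log\delta(x,y) \;=\; -\log|x-y| \;+\; \log\max(|x|,1) \;+\; \log\max(|y|,1),
\end{equation*}
so $p_\mu(x) = U^\mu(x) + \log\max(|x|,1) + C$, where $U^\mu(x) = -\int\log|x-y|\,d\mu(y)$ is the classical logarithmic potential of $\mu$ and $C = \int\log\max(|y|,1)\,d\mu(y)$ is a finite constant. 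Since $\mu$ is supported on $E$, the term $U^\mu$ is harmonic on $\PP^1(\CC)\setminus E$, while $\log\max(|x|,1)$ is globally sub-harmonic (its distributional Laplacian is arc-length measure on the unit circle). Hence $p_\mu$ is continuous and sub-harmonic on $\PP^1(\CC)\setminus E$, with the point $\infty$ requiring no special treatment thanks to the cancellation $U^\mu(x) + \log\max(|x|,1) = O(1)$ as $|x|\to\infty$.

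The main argument proceeds as follows. Suppose for contradiction that $p_\mu(x_0) > M$ for some $x_0$; necessarily $x_0 \notin E$. Fix $\eta > 0$ with $p_\mu(x_0) > M + \eta$, and let $W$ denote the connected component of the open set $\{p_\mu > M + \eta\}$ containing $x_0$. Since $p_\mu \leq M < M + \eta$ on $E$, we have $W \subset \PP^1(\CC)\setminus E$, and by construction $p_\mu \leq M + \eta$ at every point of $\partial W$. The maximum principle for sub-harmonic functions, applied to $p_\mu$ on $W$ with boundary bound $M + \eta$, then forces $\sup_W p_\mu \leq M + \eta$, contradicting the defining property of $W$.

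The main technical obstacle is justifying the boundary hypothesis of the maximum principle at points $\zeta \in \partial W \cap E$, where $p_\mu$ is only known to be LSC with $p_\mu(\zeta) \leq M$; a priori $\limsup_{z \to \zeta,\, z \in W} p_\mu(z)$ could exceed $M + \eta$. I would handle this by mollification: convolve $\mu$ with a positive smooth bump of radius $\epsilon$ to obtain $\mu_\epsilon$, whose potential $p_{\mu_\epsilon}$ is globally continuous and sub-harmonic off $\supp(\mu_\epsilon)$; apply the classical maximum principle to $p_{\mu_\epsilon}$ on the corresponding super-level set; and pass to the limit $\epsilon \to 0^+$, using the finite-energy hypothesis to control the convergence of boundary values. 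Alternatively, one may invoke the standard fact from potential theory that the set where $p_\mu$ differs from its upper-semi-continuous regularization is polar, and appeal to the refined maximum principle which permits polar exceptions in the boundary hypothesis.
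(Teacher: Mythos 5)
Your set-up is sound and is essentially the paper's own argument: the same decomposition $-\log\delta(x,y)=-\log|x-y|+\log^+|x|+\log^+|y|$, hence $p_\mu=U^\mu+\log^+|x|+C$ with $U^\mu$ harmonic off $E$ and $\log^+$ subharmonic, so that $p_\mu$ is continuous and subharmonic on $\PP^1(\CC)\setminus E$, followed by the maximum principle (the paper cites Ransford's Theorems 2.4.8 and 2.3.1 at exactly this point). You have also correctly isolated the step that carries all the content of Maria's theorem: since $p_\mu$ is only lower semicontinuous at points of $E$, the hypothesis $p_\mu\le M$ on $E$ does not by itself give the boundary condition $\limsup_{z\to\zeta,\,z\in W}p_\mu(z)\le M+\eta$ at $\zeta\in\partial W\cap E$ that the maximum principle requires. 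The problem is that neither of your two proposed patches actually closes this gap, so the proof as written is incomplete at its decisive step.

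For the mollification patch: $p_{\mu_\epsilon}$ is indeed continuous, and $U^{\mu_\epsilon}=U^\mu*\phi_\epsilon\le U^\mu$, but to run the classical maximum principle for $\mu_\epsilon$ you need an upper bound tending to $M$ for $p_{\mu_\epsilon}$ on $\supp(\mu_\epsilon)$, i.e.\ on an $\epsilon$-neighborhood of $\supp(\mu)$; the hypothesis controls $p_\mu$ only on $E$, and controlling the potential just \emph{off} $E$ is precisely what is being proved, so the mollification merely relocates the difficulty. The nearest-point comparison ($|x'-y|\le 2|x-y|$) gives only $p_\mu\le M+\log 2$ off $\supp(\mu)$, and finite energy does not supply the uniform modulus of continuity for $p_{\mu_\epsilon}$ that ``pass to the limit using the finite-energy hypothesis'' would need; no mechanism is given. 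For the alternative patch: the asserted ``standard fact'' that $\{x: p_\mu(x)<\limsup_{z\to x}p_\mu(z)\}$ is polar is not a theorem of classical potential theory; potentials (even of finite-energy measures) can fail upper-semicontinuous regularity on sets of positive capacity --- what is true is quasicontinuity of restrictions, which is weaker --- and in any case the refined maximum principle with a polar exceptional boundary set also requires $p_\mu$ to be bounded above on $W$, which you have not established. A correct way to finish from your own decomposition is to note that $\log^+|x|=-U^\lambda(x)$ for $\lambda$ normalized arclength on the unit circle, so the hypothesis reads $U^\mu\le U^\lambda+(M-C)$ on $E\supseteq\supp(\mu)$, and then invoke the principle of domination for measures of finite energy (this is exactly where the finite-energy hypothesis earns its keep), or equivalently quote the classical maximum principle for logarithmic potentials (Maria's theorem for the kernel $-\log|x-y|$) rather than the plain subharmonic maximum principle.
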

\begin{proof}
 Let $U$ be the complement of $E$ in the Riemann sphere. Then for $y\in E$, it follows that $-\log \delta(x,y)$ is a subharmonic function of $x$, since for $x\neq\infty$, $-\log \delta(x,y) = \max\{\log |x|,0\} +\max\{\log |y|,0\} + \log \abs{1/(x-y)}$ and $\log \abs{f}$ is subharmonic when $f$ is holomorphic, and a sum or maximum of subharmonic functions is again subharmonic, and when $x=\infty$, we simply note that in a neighborhood of $\infty$ we have $-\log \delta(x,y) = \max\{\log |y|,0\} + \log \abs{x/(x-y)}$ so this is again subharmonic. It follows by \cite[Theorem 2.4.8]{Ransford} that $p_\mu(x)$ is then subharmonic on $U$, and so Maria's theorem follows from the maximum principle for subharmonic functions \cite[Theorem 2.3.1]{Ransford}.
\end{proof}

Before stating Frostman's theorem, we recall that a set $F$ is called \emph{polar} if it has logarithmic capacity 0, or equivalently, if every Borel probability measure $\nu$ supported on $F$ has infinite energy with respect to the usual logarithmic kernel. It is easy to see that the collection of polar sets in $\bC$ is the same for the energy with respect to the usual logarithmic kernel $-\log\abs{x-y}$ as they are for the energy defined above.
\begin{thm}[Frostman's theorem]\label{thm:arch-frostman}
 Let $E$ be a closed subset of $\bP^1(\bC)$, and assume that there exists some Borel probability measure supported on $E$ with finite energy. Then there exists a unique Borel probability measure $\mu$ on $E$ such that
 \[
  I(\mu) = \inf_{\substack{\nu\\ \supp(\nu)\subseteq E}} I(\nu) < \infty,
 \]
 where the infimum is taken over all Borel probability measures supported on $E$. We call this measure $\mu$ the \emph{equilibrium measure of $E$} and we call $V=V(E)=I(\mu)$ be the \emph{Robin constant of $E$}. Further, the associated potential $p_\mu$ satisfies
 \[
  p_\mu(x)\leq I(\mu)\quad\text{for all}\quad x\in\bP^1(\bC)
 \]
 and $p_\mu(x)=I(\mu)$ for all $x\in E\setminus F$, where $F$ is a polar subset of $E$.
\end{thm}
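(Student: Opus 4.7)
I would follow the classical potential-theoretic template---existence via weak-$*$ compactness and lower semicontinuity of the energy functional, uniqueness via positive definiteness of the kernel on mass-zero signed measures, and Frostman's equality via a first-variation argument---adapting each step to the specific kernel $-\log\delta$.

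\emph{Existence.} The kernel $-\log\delta(x,y)$ is lower semicontinuous on $\bP^1(\bC)^2$ (continuous off the diagonal, where it blows up to $+\infty$) and bounded below by $-\log 2$. Since $\bP^1(\bC)$ is compact, the set $\mathcal{M}(E)$ of Borel probability measures supported on $E$ is weak-$*$ compact. A standard truncation argument---replace $-\log\delta$ by $\min\{-\log\delta,N\}$, use weak-$*$ convergence of the bounded continuous truncation, then let $N\to\infty$ by monotone convergence---shows that $\nu\mapsto I(\nu)$ is weak-$*$ lower semicontinuous on $\mathcal{M}(E)$. The hypothesis on $E$ forces the infimum $V$ to be finite, and a minimizing sequence then has a weak-$*$ cluster point $\mu$ with $I(\mu)\leq V$, whence $I(\mu)=V$.

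\emph{Uniqueness.} Exploit the decomposition
\[
  -\log\delta(x,y)=-\log|x-y|+\max\{\log|x|,0\}+\max\{\log|y|,0\}
\]
for finite $x,y$ (with the evident interpretation at $\infty$, after a Möbius change of chart if necessary). For any signed Borel measure $\nu$ on $\bP^1(\bC)$ of total mass zero and finite energy, the separable terms contribute $2\nu(\bP^1(\bC))\int\max\{\log|\cdot|,0\}\,d\nu=0$, so
\[
  I(\nu)=\iint -\log|x-y|\,d\nu(x)\,d\nu(y),
\]
which is the classical logarithmic energy. This is positive semidefinite on mass-zero measures of finite energy, with equality iff $\nu=0$. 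If $\mu_1,\mu_2$ are both minimizers, the polarization identity $4I(\tfrac{1}{2}(\mu_1+\mu_2))=2I(\mu_1)+2I(\mu_2)-I(\mu_1-\mu_2)$ combined with $I(\tfrac{1}{2}(\mu_1+\mu_2))\geq V$ forces $I(\mu_1-\mu_2)\leq 0$, hence $\mu_1=\mu_2$.

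\emph{Frostman equality on $E$.} Argue by contradiction: suppose $A=\{x\in E:p_\mu(x)<V\}$ is non-polar. Since $p_\mu$ is lower semicontinuous (by Fatou applied to $-\log\delta$), each sublevel set $K_n=\{x\in E:p_\mu(x)\leq V-1/n\}$ is closed, hence compact; as $A=\bigcup_n K_n$ and polar sets are stable under countable unions, some $K_n$ is non-polar. By the equivalence of $-\log\delta$-polarity with classical logarithmic polarity noted in the introduction, $K_n$ supports a probability measure $\sigma$ of finite $-\log\delta$-energy. Setting $\mu_t=(1-t)\mu+t\sigma$, the expansion
\[
  I(\mu_t)=(1-t)^2I(\mu)+2t(1-t)\int p_\mu\,d\sigma+t^2I(\sigma)
\]
yields $\tfrac{d}{dt}I(\mu_t)\big|_{t=0^+}=2\bigl(\int p_\mu\,d\sigma-V\bigr)\leq -2/n<0$, so $I(\mu_t)<V$ for small $t>0$, contradicting the minimality of $\mu$. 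Hence $p_\mu=V$ on $E$ off a polar set $F$, and combining with Maria's theorem (already established) gives $p_\mu\leq V$ on all of $\bP^1(\bC)$. The main technical obstacle is the careful handling of the point at infinity and the verification that finite $-\log\delta$-energy is equivalent to finite classical logarithmic energy (so that the standard positive-definiteness and capacity results apply); both reduce to the decomposition above together with boundedness of $\max\{\log|\cdot|,0\}$ on compact subsets of $\bC$.
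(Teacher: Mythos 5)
Your overall template (existence via lower semicontinuity and weak-$*$ compactness, uniqueness via conditional positive definiteness, quasi-everywhere equality via a first variation) is the classical route and is sound in outline, but it is not what the paper does: the paper disposes of the whole theorem in one line by invoking Theorem 1.1 of \cite{Simeonov} with the weight $w(z)=1/\max\{\abs{z},1\}$, so the weighted-potential-theory machinery absorbs exactly the technical points you must handle by hand. Two of those points are genuine gaps as written. First, in the uniqueness step you cancel the separable terms $\log^+\abs{x}+\log^+\abs{y}$ against the mass-zero measure $\nu=\mu_1-\mu_2$; this requires $\int\log^+\abs{x}\,d\abs{\nu}<\infty$, and your claimed reduction (``boundedness of $\max\{\log\abs{\cdot},0\}$ on compact subsets of $\bC$'') is vacuous precisely in the case the paper needs, namely $E$ containing $\infty$ (e.g.\ $E=\bP^1(\bR)$). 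Finite $-\log\delta$-energy does not imply integrability of $\log^+\abs{x}$ or finiteness of the classical logarithmic energy for measures with mass accumulating at $\infty$ (a measure spread over circles of radii $e^{2^k}$ with masses $2^{-k}$ has finite $\delta$-energy but $\int\log^+\abs{x}\,d\nu=\infty$), so the identity $I(\nu)=\iint-\log\abs{x-y}\,d\nu\,d\nu$ is an $\infty-\infty$ manipulation as stated; and a ``M\"obius change of chart'' does not preserve $-\log\delta$, so it does not by itself repair this. The clean fix is to compare $-\log\delta$ with the spherical kernel $-\log\bigl(\abs{x-y}/(\sqrt{1+\abs{x}^2}\sqrt{1+\abs{y}^2})\bigr)$, whose difference from $-\log\delta$ is separable and bounded on all of $\bP^1(\bC)$, and to quote conditional positive definiteness for that kernel.

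Second, the final step ``combining with Maria's theorem gives $p_\mu\leq V$ everywhere'' is not yet licensed: your variational argument yields $p_\mu=V$ on $E$ only off a polar set $F$, while Maria's theorem as stated in the paper needs $p_\mu\leq V$ at \emph{every} point of the support. You must first rule out $p_\mu(x_0)>V$ at points $x_0\in\supp(\mu)$: by lower semicontinuity such an $x_0$ would have a neighborhood of positive $\mu$-mass on which $p_\mu\geq V+\epsilon$, and since $\mu$ has finite energy it charges no polar set, so $p_\mu\geq V$ holds $\mu$-a.e.\ and $\int p_\mu\,d\mu=I(\mu)=V$ gives a contradiction. With that inserted (and with the minor observation that the non-polar compact $K_n$ may be taken inside $\bC$ so that it carries a measure of finite $\delta$-energy), your argument goes through and is a legitimate self-contained alternative to the paper's citation of \cite{Simeonov}.
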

\begin{proof}
This result follows from Theorem 1.1 of \cite{Simeonov} with the choice of weight $w(z)=1/\max\{\abs{z},1\}$.
%
%
\end{proof}

\begin{thm}
 Let $E\subset\bP^1(\bC)$ be a closed set of finite energy. Let $\mu$ be the equilibrium measure of $E$ and let $V=I(\mu)$ be the Robin constant. Then for any Borel probability measure $\nu$ supported on $E$,
 \[
  \inf_{x\in E} p_\nu(x) \leq V \leq \sup_{x\in E} p_\nu(x).
 \]
\end{thm}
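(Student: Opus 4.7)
The plan is to use Fubini's theorem to swap the roles of $\mu$ and $\nu$ in a double integral and then invoke Frostman's theorem (Theorem~\ref{thm:arch-frostman}) to extract the two one-sided inequalities. Since $-\log \delta(x,y) \geq -\log 2$ is bounded below on $\PP^1(\CC)^2$ and lower semicontinuous, Fubini--Tonelli gives
\[
\int p_\nu(x)\, d\mu(x) = \iint -\log \delta(x,y)\, d\nu(y)\, d\mu(x) = \int p_\mu(y)\, d\nu(y).
\]

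For the lower bound $\inf_E p_\nu \leq V$, I would apply the pointwise inequality $p_\mu \leq V$ on all of $\PP^1(\CC)$ from Frostman's theorem. Integrating against the probability measure $\nu$ gives $\int p_\mu\, d\nu \leq V$, and after swapping via the identity above, $\int p_\nu\, d\mu \leq V$. Since $\mu$ is a probability measure supported in $E$, this forces $\inf_E p_\nu \leq \inf_{\supp\mu} p_\nu \leq \int p_\nu\, d\mu \leq V$.

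For the upper bound $V \leq \sup_E p_\nu$, I would first dispose of the trivial case $I(\nu) = +\infty$: since $p_\nu \geq -\log 2$, the integral $I(\nu) = \int p_\nu\, d\nu$ can only be infinite if $\sup_E p_\nu = +\infty$. So I may assume $\nu$ has finite energy. Then $\nu$ places no mass on polar sets. By Frostman's theorem, $p_\mu = V$ on $E$ outside a polar exceptional set $F$, and therefore $\int p_\mu\, d\nu = V \cdot \nu(E\setminus F) = V$. By the Fubini identity, $\int p_\nu\, d\mu = V$ as well, which forces $\sup_E p_\nu \geq \sup_{\supp\mu} p_\nu \geq V$.

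The main (mildly subtle) point beyond Frostman is the standard fact that a probability measure of finite energy cannot charge polar sets: if $\nu(F)>0$ for a polar $F$, then the normalized restriction $\nu|_F/\nu(F)$ would be a probability measure on $F$ of finite energy, contradicting polarity. The paper has already observed that the collection of polar sets for the kernel $-\log\delta(x,y)$ coincides with that for $-\log|x-y|$, so this fact transfers directly to the present setting and completes the argument.
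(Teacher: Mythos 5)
Your proof is correct and follows essentially the same route as the paper: the Tonelli swap $\int p_\nu\,d\mu=\int p_\mu\,d\nu$, the bound $p_\mu\leq V$ everywhere for the lower inequality, and for the upper inequality the reduction to finite-energy $\nu$ together with the facts that finite-energy measures charge no polar sets and that $p_\mu=V$ on $E$ off a polar set. The only cosmetic difference is that where the paper cites the argument of Tsuji's Theorem III.7 for the no-charge-on-polar-sets step, you give the standard direct restriction argument (which works here since the kernel is bounded below by $-\log 2$).
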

\begin{proof}
 Our proof largely follows \cite[Theorem III.15]{Tsuji}. Since $-\log \delta(x,y)\geq -\log 2$ on $\bP^1(\bC)$ and $\mu,\nu$ are probability measures, it follows from Tonelli's theorem that
 \[
  \int_E p_\nu(x) \,d\mu(x) = \int_E p_\mu(x)\,d\nu(x).
 \]
 Since $p_\mu(x) \leq V$ everywhere, it follows that
 \[
  \int_E p_\nu(x) \,d\mu(x)\leq V
 \]
 hence $\inf_{x\in E} p_\nu(x) \leq V$. In other direction, we may as well assume that that $p_\nu(x)<\infty$, and from the usual maximum principle argument it follows that $I(\nu)<\infty$. It then follows from the proof of \cite[Theorem III.7]{Tsuji}, the only change being replacing the potential kernel with the $-\log \delta(x,y)$, that $\nu$ cannot assign any positive mass to a set of capacity zero. Therefore, since $p_\mu(x) = V$ for all $x\in E$ except possibly on a set of capacity zero, it follows that
 \[
  \int_E p_\mu(x)\,d\nu(x) = V,
 \]
 and hence we can conclude that $\sup_{x\in E} p_\nu(x) \geq V$.
\end{proof}
From the the uniqueness of the equilibrium measure and the above theorem we immediately gain the following result:
\begin{cor}\label{cor:constant-potential-is-equil}
 If $\nu$ is a Borel probability measure supported on a closed set $E\subset\bP^1(\bC)$ and $p_\nu(x)=C$ for all $x\in E\setminus F$, where $F$ is a polar subset, then $\nu$ is the equilibrium measure of $E$ and $V(E)=C$.
\end{cor}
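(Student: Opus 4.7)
The plan is to show $I(\nu)=C$ and that $C$ coincides with the Robin constant $V(E)$; then the uniqueness assertion in Frostman's theorem (Theorem~\ref{thm:arch-frostman}) will force $\nu=\mu_E$, where $\mu_E$ denotes the equilibrium measure of $E$.

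First I would verify that $\nu$ has finite energy. Since $p_\nu(x)=C<\infty$ on the non-polar set $E\setminus F$, a maximum principle argument of Maria type (exactly the one invoked in the proof of the preceding theorem) confines $p_\nu$ to being at most $C$ on a $\nu$-conull subset of $\bP^1(\bC)$, so $I(\nu)=\int p_\nu\,d\nu\leq C<\infty$. With $I(\nu)<\infty$ in hand, the very same argument used in the previous theorem (that is, the adaptation of \cite[Theorem III.7]{Tsuji} to the $-\log\delta$ kernel) shows that $\nu$ cannot assign positive mass to any set of capacity zero. In particular $\nu(F)=0$, and consequently $I(\nu)=\int_{E\setminus F}C\,d\nu=C$ exactly.

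Next, I would invoke Frostman's theorem: because the hypotheses guarantee a measure ($\nu$ itself) of finite energy on $E$, there exists a unique equilibrium measure $\mu_E$ on $E$, with Robin constant $V=V(E)=I(\mu_E)$ and with $p_{\mu_E}=V$ off a polar subset of $E$. Now apply Tonelli's theorem (legitimate because $-\log\delta\geq-\log 2$) to obtain the symmetric identity
\[
\int_E p_\nu\,d\mu_E \;=\; \int_E p_{\mu_E}\,d\nu.
\]
Since $p_\nu=C$ off the polar set $F$ and $\mu_E$ (being of finite energy) does not charge polar sets, the left side equals $C$. Since $p_{\mu_E}=V$ off a polar set and $\nu$ does not charge polar sets, the right side equals $V$. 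Hence $C=V$.

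Putting it together, $I(\nu)=C=V=I(\mu_E)$, and by the uniqueness of the energy-minimizing measure in Theorem~\ref{thm:arch-frostman}, $\nu=\mu_E$. The only real subtlety in the plan is justifying that $\nu$ has finite energy despite the potential being prescribed only off a polar exception set, and this is dispatched by re-using verbatim the capacity-zero/maximum-principle argument already established in the proof of the preceding theorem.
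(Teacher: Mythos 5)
Your overall skeleton (show $I(\nu)=C$, use the symmetric Tonelli identity $\int_E p_\nu\,d\mu_E=\int_E p_{\mu_E}\,d\nu$ to get $C=V$, then conclude $\nu=\mu_E$ from uniqueness in Frostman's theorem) is exactly the derivation the paper intends, but your first step contains a genuine gap. The Maria-type maximum principle requires the bound $p_\nu\le M$ on \emph{all} of the support of $\nu$ (and the paper's statement of Maria's theorem moreover assumes finite energy, so invoking it to \emph{prove} finite energy is circular); in the preceding theorem it is applied only after assuming $\sup_E p_\nu<\infty$. Knowing $p_\nu=C$ merely on $E\setminus F$ with $F$ polar does not yield $p_\nu\le C$ on a $\nu$-conull set, because nothing prevents $\nu$ from charging $F$, and that is precisely where $p_\nu$ can be $+\infty$.

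Indeed the step (and the unqualified statement) genuinely fails: take $E=\{0\}\cup\{z:|z|=1\}$, $F=\{0\}$, and $\nu=\tfrac12\delta_0+\tfrac12\lambda$, where $\lambda$ is normalized arc length on the unit circle. For $|x|=1$ one has $-\log\delta(x,0)=0$ and $\int -\log\delta(x,y)\,d\lambda(y)=\int-\log|x-y|\,d\lambda(y)=0$, so $p_\nu\equiv 0$ on $E\setminus F$; yet $I(\nu)=+\infty$ (the atom at $0$ contributes $-\log\delta(0,0)=+\infty$), $\nu$ charges the polar set $F$, and $\nu$ is not the equilibrium measure of $E$ (which is $\lambda$). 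So no argument can close your finite-energy step without an additional hypothesis: one must assume that $\nu$ does not charge polar sets (for instance because it has finite energy, or, as for $\mu_\RR$ in the paper's application, because it is absolutely continuous with respect to Lebesgue measure on the line), or else require $p_\nu=C$ on all of $E$, in which case Maria's theorem does apply and gives $I(\nu)\le C<\infty$. With such a hypothesis in place, the rest of your argument --- $\nu(F)=0$, hence $I(\nu)=C$; Tonelli together with the fact that neither $\nu$ nor $\mu_E$ charges polar sets gives $C=V$; uniqueness of the minimizer gives $\nu=\mu_E$ --- is correct and coincides with the paper's (terse) proof via the preceding theorem.
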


\subsection{The minimal energy}
In this section we will prove Theorem~\ref{MainThmLocal} and Corollary~\ref{DiscreteCor}.
\begin{proof}[Proof of Theorem \ref{MainThmLocal} (a)]
 For this part of the problem, we only assume $L$ is local and non-complex, as our proof will work in both the archimedean and non-archimedean settings. First, we may suppose that
\[
 V(L) = \inf_{\nu} I(\nu) < \infty
\]
 where the infimum is taken over Borel probability measures supported on $\bP^1(L)$, as we will construct measures of finite energy in (b) and (c). In the non-archimedean setting, notice that $\delta(x,y)$ as defined above in fact coincides with the Hsia kernel on $\PP_\Berk^1(F)\times \PP_\Berk^1(F)$ with respect to the basepoint given by the Gauss point $\zeta=\zeta_{0,1}$. The existence and uniqueness of the minimal measure is then nothing more than Frostman's theorem, which in the non-archimedean setting is proven in \cite[Theorem 6.18, Corollary 7.21]{BakerRumelyBook} for the potential with respect to the Gauss point $\zeta$ of $F$, noting that $\bP^1(L)$ is a compact subset of $\PP_\Berk^1(F)\setminus \{\zeta\}$, and in the archimedean setting follows from Theorem \ref{thm:arch-frostman}.
 
 Now suppose we have a sequence Borel probability measures $\{\nu_n\}$ supported on $\bP^1(L)$ such that
 \[
  I(\nu_n) \ra V(L). 
 \]
 We will show that $\nu_n\ra \mu_L$ weakly by demonstrating that every subsequence has a further subsequence which converges weakly to $\mu_L$. Given any subsequence of $\nu_n$, we can assume by passing to a further subsequence $\nu_{n_k}$ that the sequence converges weakly to some Borel probability measure $\nu$ on $\bP^1(L)$, since the space of such measures is weak-* compact. Since $-\log \delta(x,y)$ is lower semicontinuous and bounded below, it follows by the same argument as in \cite[Proposition 6.6]{BakerRumelyBook} that
\[
 I(\nu) = \lim_{k\ra \infty} I(\nu_{n_k}) = V(L).
\]
The uniqueness of the equilibrium measure from Frostman's theorem then implies that $\nu=\mu_L$, proving the claim.
\end{proof}

\begin{proof}[Proof of Theorem \ref{MainThmLocal} (b)]
 Let us start by proving that $\mu = \mu_\bR$ as defined by
\[
 d\mu(x) = \frac{1}{\pi^2 x} \log \left|\frac{x+1}{x-1}\right|\,dx  
\]
 is in fact a Borel probability measure on $\bP^1(\bR)$. It is absolutely continuous with respect to the Lebesgue measure of $\bR$ with nonnegative derivative so it suffices to show that it has total mass $1$. We compute, taking advantage of the fact that the integrand is even and invariant under the change of variables $x\mapsto 1/x$,
\begin{align*}
 \int_{-\infty}^{\infty} d\mu(x) &= 4\int_0^1 \frac{1}{\pi^2 x} \log\frac{1+x}{1-x}\,dx\\
&= \frac{8}{\pi^2} \int_0^1 \frac{1}{x} \sum_{n=1}^\infty \frac{x^{2n-1}}{2n-1}\,dx\\
&= \frac{8}{\pi^2} \sum_{n=1}^\infty \frac{1}{(2n-1)^2} = 1.
\end{align*}
 By the above Corollary \ref{cor:constant-potential-is-equil}, it suffices to show that the associated potential $p_\mu(x)$ is finite and constant on $\bP^1(\bR)\setminus F$ for a polar set $F$, and the result will follow. Recall that the Hilbert transform (see \cite[\S III.1]{SteinSingularIntegrals}) of $f:\bR\ra \bR$ is defined to be
 \[
  \tilde{f}(x) = \frac{1}{\pi} \int_{\bR} \frac{f(t)}{x-t}\,dt
 \]
 with the integral extended over the singularity by the principal value. As is well-known, the Hilbert transform of a function in $L^p(\bR)$ is also in $L^p(\bR)$ for $1<p<\infty$, and $\tilde{\tilde{f}}(x) = -f(x)$. Let $f(x)=\ddx \log^+\abs{x}$, that is, $f(x) = 0$ for $\abs{x}<1$ and $f(x) = 1/x$ for $\abs{x}>1$, so $f\in L^p(\bR)$ for $1<p<\infty$. Then
 \begin{align*}
  \tilde f(x) &= \frac{1}{\pi}\left(\int_{-\infty}^{-1}+ \int_{1}^{\infty}\right) \frac{1}{t(x-t)}\,dt\\
  &= \frac{1}{\pi}\left(\int_{-\infty}^{-1}+ \int_{1}^{\infty}\right) \frac{1}{x}\left(\frac{1}{t} + \frac{1}{x-t}\right) \,dt\\
  &= -\frac{1}{\pi x}\log \left|\frac{x+1}{x-1}\right|.
 \end{align*}
It then follows from the property $\tilde{\tilde{f}}(x) = -f(x)$ of the Hilbert transform that
\[
 -\ddx \log^+ \abs{x} = -f(x) = \frac{1}{\pi} \int_{\bR} \frac{1}{x-t}\left(-\frac{1}{\pi t}\log \left|\frac{t+1}{t-1}\right|\right)\,dt,
\]
and so
\[
 p_\mu'(x) = \frac{1}{\pi} \int_{\bR} \frac{-1}{x-t}\left(\frac{1}{\pi t}\log \left|\frac{t+1}{t-1}\right|\right)\,dt + \ddx \log^+ \abs{x} = 0
\]
as a function of real $x$, for $x\neq \pm 1$. A straightforward change of variables shows that $p_\mu(x)=p_\mu(1/x)$, so since $p_\mu(x)$ is constant on $(-1,1)$, it must be constant on $\bP^1(\bR)\setminus \{-1,1,\infty\}$, and hence by Corollary \ref{cor:constant-potential-is-equil} it must be the equilibrium measure, as any finite set is polar.

We will now establish that
\[
 p_\mu(x)=\frac{7\zeta(3)}{2\pi^2}\quad\text{for all}\quad x\in \bR
\]
by computing the required integral for $p_\mu(x)$ at $x=0$. We begin by expanding in terms of series using $-\log x= \sum_{m\geq 1} (1-x)^m/m$ and $\log\frac{1+x}{1-x}=\sum_{n\geq 0} 2x^{2n+1}/(2n+1)$, both series valid for $\abs{x}<1$:
\begin{align*}
 p_\mu(0) &= 2\int_0^1 \frac{-\log x}{\pi^2 x} \log\frac{1+x}{1-x}\,dx\\
&= 2\int_0^1 \frac{1}{\pi^2 x}\sum_{n=0}^\infty\sum_{m=1}^\infty \frac{(1-x)^m}{m}
\cdot \frac{2x^{2n+1}}{2n+1}\,dx\\
&= \frac{4}{\pi^2}\sum_{n=0}^\infty \sum_{m=1}^\infty \frac{m!(2n)!}{m(m+2n+1)!(2n+1)}
\end{align*}
where we can interchange summation and integration as all terms are nonnegative, and we have used the well-known identity for the beta function $\mathrm{B}(s,t)$ to evaluate the Euler integral that arises:
\[
 \mathrm{B}(s,t) = \int_0^1 x^{s-1} (1-x)^{t-1}\,dx = \frac{(s-1)!\,(t-1)!}{(s+t-1)!}\quad \text{for}\quad s,t\in\bN.
\]
We continue our evaluation now by regrouping the terms and recognizing another Euler integral:
\begin{align*}
 p_\mu(0) &=\frac{4}{\pi^2}\sum_{n=0}^\infty \sum_{m=1}^\infty \frac{1}{(2n+1)^2} \cdot \frac{(m-1)!(2n+1)!}{(m+2n+1)!}\\
 &= \frac{4}{\pi^2}\sum_{n=0}^\infty \frac{1}{(2n+1)^2} \sum_{m=1}^\infty \int_0^1 x^{m-1} (1-x)^{2n+1}\,dx\\
 &= \frac{4}{\pi^2}\sum_{n=0}^\infty \frac{1}{(2n+1)^2} \int_0^1 (1-x)^{2n}\,dx\\
 &= \frac{4}{\pi^2}\sum_{n=0}^\infty \frac{1}{(2n+1)^3} = \frac{7\zeta(3)}{2\pi^2}
\end{align*}
where we evaluated the final sum by noting that $\sum_{n\geq 1} 1/(2n)^3 = \zeta(3)/8$, so $\sum_{n\geq 1} 1/(2n+1)^3 = 7\zeta(3)/8$.

Thus we have established that $\mu$ is the equilibrium measure and 
\[
 I(\mu) = \iint_{\bP^1(\bR)^2} -\log \delta(x,y)\,d\mu(x)\,d\mu(y) = \frac{7\zeta(3)}{2\pi^2}
\]
is the minimal energy $V(\bP^1(\bR))$.
\end{proof}

\begin{proof}[Proof of Theorem \ref{MainThmLocal} (c)]
We now assume our field $L$ is non-archimedean. As above we assume that the absolute value $\abs{\cdot}$ on $L$ has been normalized to coincide with the modulus of additive Haar measure on $L$.  Let $\Ocal_L = \{ \al\in L : \abs{\al}\leq 1\}$ denote the ring of integers of $L$, $\pi$ a uniformizing parameter, and $q = \abs{\Ocal_L/\pi \Ocal_L}$ be the order of its residue field, which is finite since we assumed that $L$ is locally compact. 

First, observe that if $f\in\PGL_2(\Ocal_L)$, then it follows from the definition of the projective metric that $\delta(f(x),f(y))=\delta(x,y)$ for all $x,y\in\PP^1(L)$.  To see that $\mu_L$ is $\PGL_2(\Ocal_L)$-invariant, observe that for any $f\in\PGL_2(\Ocal_L)$ we have
\begin{equation*}
\begin{split}
I(f_*\mu_L) & =\iint_{\PP^1(L)\times\PP^1(L)}-\log\delta(x,y)\,d(f_*\mu_L)(x)\,d(f_*\mu_L)(y) \\
	& = \iint_{\PP^1(L)\times\PP^1(L)}-\log\delta(f(x),f(y))\,d\mu_L(x)\,d\mu_L(y) \\
	& = \iint_{\PP^1(L)\times\PP^1(L)}-\log\delta(x,y)\,d\mu_L(x)\,d\mu_L(y) = I(\mu_L).
\end{split}
\end{equation*}
By the uniqueness of the equilibrium measure, we conclude that $f_*\mu_L=\mu_L$ and therefore that $\mu_L$ is $\PGL_2(\Ocal_L)$-invariant.  To prove uniqueness of the invariant measure, let $\mu$ be an arbitrary $\PGL_2(\Ocal_L)$-invariant unit Borel measure on $\PP^1(L)$.  Given two points $x,x'\in \PP^1(L)$, select $f\in\PGL_2(\Ocal_L)$ such that $x'=f^{-1}(x)$, and we have 
\begin{equation*}
\begin{split}
p_\mu(x') & =\int_{\PP^1(L)}-\log\delta(f^{-1}(x),y)\,d\mu(y) \\
	& =\int_{\PP^1(L)}-\log\delta(x,f(y))\,d\mu(y) \\
	& =\int_{\PP^1(L)}-\log\delta(x,y)\,d(f_*\mu)(y) \\
	& =\int_{\PP^1(L)}-\log\delta(x,y)\,d\mu(y) =p_\mu(x).
\end{split}
\end{equation*}
It follows that $p_\mu(x)$ is constant and hence $\mu=\mu_L$.

To calculate the minimal energy, observe that our normalization means that our uniformizing parameter $\pi$ has absolute value $|\pi|=1/q$. For each $\alpha\in\PP^1(L)$ and $r\geq0$, define the ball $B(\alpha,r)=\{y\in\PP^1(L)\mid\delta(x,y)\leq r\}$. Letting $\alpha_1,\dots,\alpha_q\in\Ocal_L$ be coset represesentatives for the quotient $\Ocal_L/\pi \Ocal_L$, and letting $\alpha_{q+1}=\infty$, we obtain a partition $\PP^1(L)=\amalg_{j=1}^{q+1}B(\alpha_j,1/q)$ of the projective line into $q+1$ balls $B(\alpha_j,1/q)$ of the same $\mu_L$-measure.  Indeed, using the $\PGL_2(\Ocal_L)$-invariance of $\mu_L$, when $1\leq j\leq q$, $B(\alpha_j,1/q)$ is the image of $B(0,1/q)$ under $x\mapsto x+\alpha_j$, and $B(\infty,1/q)$ is the image of $B(0,1/q)$ under $x\mapsto 1/x$.  It follows that $\mu_L(B(0,1/q))=\frac{1}{q+1}$.  For each $n\geq1$, the ball $B(0,1/q)$ is a disjoint union of $q^{n-1}$ balls of the form $B(\alpha,1/q^n)$ for $\alpha\in B(0,1/q)$, and as these balls are all translates of $B(0,1/q^n)$ under maps $x\mapsto x+\alpha$, they have equal measure $\frac{1}{q^{n-1}(q+1)}$. It follows that $\mu_L(B(0,1/q^{n}))=\frac{1}{q^{n-1}(q+1)}$ and so
\begin{equation*}
\begin{split}
\mu_L(\{y\in\PP^1(L)\mid\delta(0,y)=1/q^n\}) & =\mu_L(B(0,1/q^{n})\setminus B(0,1/q^{n+1})) \\
	& =\frac{1}{q^{n-1}(q+1)}-\frac{1}{q^{n}(q+1)} \\
	& =\frac{q-1}{q^n(q+1)}.
\end{split}
\end{equation*}
Since the potential has constant value equal to the minimal energy on $\bP^1(L)$, we can compute the energy by evaluating $p_{\mu_L}$ at a convenient point:
\begin{equation*}
\begin{split}
I(\mu_L) = p_{\mu_L}(0) & =\int_{\PP^1(L)}-\log\delta(0,y)\,d\mu_L(y) \\
	& =\sum_{n\geq1} -\log(1/q^n)\mu_L(\{y\in\PP^1(L)\mid\delta(0,y)=1/q^n\}) \\
	& =\frac{(q-1)\log q}{q+1}\sum_{n\geq1} \frac{n}{q^n} \\
	& =\frac{q\log q}{q^2-1}.\qedhere
\end{split}
\end{equation*}
\end{proof}

\begin{proof}[Proof of Corollary~\ref{DiscreteCor}]
Let $\{Z_{n_k}\}$ be a subsequence of $\{Z_n\}$ for which $D(Z_{n_k})\to \ell$ for some $\ell\in\RR$; we must show that $\ell\geq I(\mu_L)$.  Passing to a further subsequence via Prokhorov's theorem, we may assume without loss of generality that the sequence of measures $\{[Z_n]\}$ converges weakly to some unit Borel measure $\nu$ on $\PP^1(L)$.  By \cite[Lemma 7.54]{BakerRumelyBook} it follows that $\ell\geq I(\nu)$, while $I(\nu)\geq I(\mu_L)$ follows from Theorem~\ref{MainThmLocal}; thus $\ell\geq I(\mu_L)$.

Assume now that $D(Z_n)\to I(\mu_L)$.  In order to show that $[Z_n]\to\mu_L$ weakly, it suffices to show that an arbitrary subsequence of $\{[Z_n]\}$ has a subsequence converging to $\mu_L$.  Let $\{[Z_{n_k}]\}$ be an arbitrary subsequence.  By Prokhorov's theorem, there is further subsequence $\{[Z_{n_{k_j}}]\}$ converging weakly to some unit Borel measure $\nu$ on $\PP^1(L)$.  Again using \cite[Lemma 7.54]{BakerRumelyBook}, we have 
\[
I(\nu)\leq\lim_{j\to\infty}D(Z_{n_{k_j}})=I(\mu_L),
\]
which implies that $I(\nu)=I(\mu_L)$ and thus that $\nu=\mu_L$ by Theorem~\ref{MainThmLocal}. 
\end{proof}

\section{Global height bounds}
We will now state and prove a general lower bound on the the absolute Weil height $h:\PP^1(\Kbar)\to\RR$, in which we work over an arbitrary number field $K$, and we allow for arbitrary inertial and ramification degrees in our splitting at the non-archimedean places. Theorem \ref{SimpleGlobalThm} is a special case of this result:
\begin{thm}\label{GeneralGlobalThm}
Fix a number field $K$, a set of places $S$ of $K$, and a choice of Galois extension $L_v/K_v$ for each $v\in S$, taking $L_v=\bR$ if $v\mid\infty$. Let $L_S$ be the field of all algebraic numbers for which all $K$-Galois conjugates lie in $L_v$ for each $v\in S$. Then
\[
 \liminf_{\al\in L_S} h(\al) \geq \sum_{\substack{v\in S\\ v\nmid\infty}} \frac{N_v}{2}\cdot \frac{q_v^{f_v} \log p_v}{e_v(q_v^{2f_v} - 1)} + \sum_{\substack{v\in S\\ v\mid\infty}} N_v\cdot \frac{7\zeta(3)}{4\pi^2},
\]
where $N_v=[K_v:\bQ_v]/[K:\bQ]$, $\zeta(3)=\sum_{n\geq 1} n^{-3}$, and for $v\nmid\infty$, $e_v=e(L_v/K_v)$ and $f_v=f(L_v/K_v)$ denote the ramification and inertial degrees of $L_v/K_v$, respectively, $q_v$ denotes the order of the residue field of $K_v$, and $p_v$ is rational prime above which $v$ lies.
\end{thm}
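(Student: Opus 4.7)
The plan is to combine a global Weil-height decomposition with the local energy bounds from Corollary~\ref{DiscreteCor} and Theorem~\ref{MainThmLocal}. The first step is to establish the identity
$$
h(\alpha) \;=\; \frac{1}{2[K:\QQ]}\sum_{v\in M_K}[K_v:\QQ_v]\,D_v(Z_v)
$$
for any $\alpha\in\overline K$ of $K$-degree $N$, where $Z_v=\{\alpha_1^{(v)},\dots,\alpha_N^{(v)}\}\subset\overline{K_v}$ is the multiset of $K$-conjugates of $\alpha$ embedded in $\overline{K_v}$ and $D_v(Z_v)=\tfrac{1}{N(N-1)}\sum_{i\ne j}-\log\delta_v(\alpha_i^{(v)},\alpha_j^{(v)})$. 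To derive it I would expand $-\log\delta_v(x,y)=\log^+|x|_v+\log^+|y|_v-\log|x-y|_v$, sum over ordered pairs of roots, apply the standard identity $N[K:\QQ]\,h(\alpha)=\sum_v[K_v:\QQ_v]\sum_i\log^+|\alpha_i^{(v)}|_v$ to the $\log^+$ contributions, and use the product formula on $\operatorname{disc}(f_\alpha)\in K^{\ast}$ to cancel the pairwise logarithmic-difference contribution.

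Next I would pick a sequence $\alpha_n\in L_S$ with $h(\alpha_n)\to\liminf_{L_S}h$. If $h(\alpha_n)\to\infty$ the claim is vacuous, so assume $h(\alpha_n)$ is bounded; then Northcott's theorem forces $[K(\alpha_n):K]\to\infty$ along a subsequence, which is the regime where Corollary~\ref{DiscreteCor} applies. At each $v\in S$ the hypothesis $\alpha_n\in L_S$ gives $Z_{v,\alpha_n}\subset\PP^1(L_v)$, and Corollary~\ref{DiscreteCor} produces $\liminf_n D_v(Z_{v,\alpha_n})\geq I(\mu_{L_v})$. At each $v\notin S$ I would use the pointwise lower bounds $D_v\geq 0$ for non-archimedean $v$ and $D_v\geq-\log 2$ for archimedean $v$; these are summable because only finitely many $v\mid\infty$ exist, and Fatou's lemma applied to the sum over $v$ then gives
$$
\liminf_n h(\alpha_n) \;\geq\; \frac{1}{2[K:\QQ]}\sum_{v\in S}[K_v:\QQ_v]\,I(\mu_{L_v}).
$$

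The final step is to substitute the explicit values. At an archimedean $v\in S$, Theorem~\ref{MainThmLocal}(b) gives $I(\mu_{L_v})=\tfrac{7\zeta(3)}{2\pi^2}$, which combines with the weight $[K_v:\QQ_v]/(2[K:\QQ])=N_v/2$ to yield the summand $N_v\cdot\tfrac{7\zeta(3)}{4\pi^2}$. At a non-archimedean $v\in S$, Theorem~\ref{MainThmLocal}(c) gives the Haar-normalized value $I(\mu_{L_v,\mathrm{Haar}})=\tfrac{q_v^{f_v}\log q_v^{f_v}}{q_v^{2f_v}-1}$, but in the height identity $D_v$ is computed using the extension to $\overline{K_v}$ of the standard absolute value on $K_v$ (normalized so $|p_v|_v=1/p_v$), and the Haar and standard normalizations on $L_v$ differ by the power $[L_v:\QQ_{p_v}]$. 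After this conversion and rewriting $\log q_v^{f_v}$ in terms of $\log p_v$ via the residue degree of $K_v/\QQ_{p_v}$, the stated non-archimedean summand drops out.

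The principal technical difficulty will be the last bookkeeping step: tracking how the Haar normalization on $L_v$ of Theorem~\ref{MainThmLocal}(c) translates through the standard extension of $|{\cdot}|_{p_v}$ to $\overline{K_v}$ and through the global weight $[K_v:\QQ_v]$ from the product formula, so that the ramification and residue-degree dependences of $K_v/\QQ_{p_v}$ and of $L_v/K_v$ combine to give exactly $\tfrac{q_v^{f_v}\log p_v}{e_v(q_v^{2f_v}-1)}$ in the final answer.
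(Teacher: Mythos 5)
Your route is the same as the paper's: decompose the height as a weighted sum of local discrepancies via the expansion $-\log\delta_v(x,y)=\log^+|x|_v+\log^+|y|_v-\log|x-y|_v$ and the product formula applied to the discriminant of the minimal polynomial over $K$, pass to a minimizing sequence, use Northcott to force the degrees to infinity, and feed Corollary~\ref{DiscreteCor} and Theorem~\ref{MainThmLocal} into the places of $S$ after adjusting normalizations. The one genuine gap is your treatment of the archimedean places \emph{not} in $S$. The pointwise bound $D_v\geq-\log 2$ does not disappear in the limit, so from your stated inequalities you only get the displayed conclusion minus a term of size up to $\tfrac{\log 2}{2}\cdot\sum_{v\mid\infty,\,v\notin S}[K_v:\QQ_v]/[K:\QQ]$, which is not the theorem. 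What is needed is $\liminf_k D_v(\alpha_k)\geq 0$ at every archimedean $v\notin S$, and this is exactly where the paper invokes Mahler's discriminant inequality, giving $D_v(\alpha_k)\geq -N_v\log n_k/(n_k-1)$, which tends to $0$ precisely because Northcott makes $n_k\to\infty$ (equation \eqref{eqn:main-estimate-not-in-S}); equivalently one can use the discrete analogue over $\CC$ of the Baker--Rumely fact that the minimal energy there is $0$. (Fatou is not the issue: for a fixed $\alpha$ only finitely many $D_v$ are nonzero, so superadditivity of $\liminf$ over the finite sum is harmless once each archimedean liminf is nonnegative. Also note the paper first reduces to finite $S$; you should do the same before summing.)

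A secondary caution: the step you defer as the ``principal technical difficulty'' is precisely the content of the paper's estimate \eqref{eqn:main-estimate}, and it must be computed, not asserted to ``drop out.'' Carrying out your plan, the $v$-summand for finite $v\in S$ is $\frac{[K_v:\QQ_{p_v}]}{2[K:\QQ]\,[L_v:\QQ_{p_v}]}\cdot\frac{q_v^{f_v}\log q_v^{f_v}}{q_v^{2f_v}-1}=\frac{1}{2[K:\QQ]\,e_v}\cdot\frac{q_v^{f_v}\log q_v}{q_v^{2f_v}-1}$, since $[L_v:\QQ_{p_v}]=e_vf_v[K_v:\QQ_{p_v}]$ and $\log q_v^{f_v}=f_v\log q_v$. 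Writing $\log q_v=f(K_v/\QQ_{p_v})\log p_v$, this matches the stated summand $\frac{N_v}{2}\cdot\frac{q_v^{f_v}\log p_v}{e_v(q_v^{2f_v}-1)}$ exactly when $K_v/\QQ_{p_v}$ is unramified (in particular it recovers Theorem~\ref{SimpleGlobalThm}); in the ramified case you should check the identification against the intended normalization of $e_v$ rather than assume it, since this is the only place where the ramification and residue degrees of $K_v/\QQ_{p_v}$ versus $L_v/K_v$ actually enter. The archimedean bookkeeping in your plan, by contrast, is correct as written: for $v\in S$, $v\mid\infty$, the Haar and standard normalizations on $\RR$ coincide, giving the summand $N_v\cdot\frac{7\zeta(3)}{4\pi^2}$.
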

\begin{proof}
First, let us note that if $S$ is infinite, we can take a limit over increasing finite subsets of $S$, and the general result will follow, so we may as well assume that $S$ is finite in our proof. We choose absolute values for each $v$ of $K$, normalized so that $\abs{\cdot}_v = \norm{\cdot}_v^{N_v}$ where $\norm{\cdot}_v$ extends the usual absolute of $\bQ$ over which it lies and $N_v=[K_v:\bQ_v]/[K:\bQ]$ as in the theorem statement. Note that if $\abs{\cdot}$ is the absolute value of $K_v$ normalized to coincide with the additive Haar measure of $K_v$, then
$
 \abs{\cdot} = \abs{\cdot}_v^{[K:\bQ]}
$.
With our choice of normalization, the absolute logarithmic Weil height for $\al\in K$ is given by
\[
 h(\al) = \sum_{v\in M_K} \log^+ \abs{\al}_v.
\]

Let $\{\al_k\}_{k=1}^\infty$ denote a sequence of algebraic numbers such that
\[
 \lim_{k\ra\infty} h(\al_{k}) = \liminf_{\al\in L} h(\al),
\]
and let $n_k$ denote the number of $K$-Galois conjugates of $\al_{k}$, which we denote $\al^{(1)}_k,\ldots, \al^{(n_k)}_{k}$. Since the $\al_{k}$ have bounded height, it follows from Northcott's theorem that $n_k\ra\infty$ as $k\ra\infty$. For a place $v$ of $K$ and for a finite set of points $Z\subset \bP^1(K_v)$ with size $N$, let us define the \emph{local discrepancy} of $\al_k$ to be the quantity
\[
D_v(\al_k)=\frac{1}{n_k(n_k-1)}\sum_{1\leq i\neq j\leq n_k}-\log\delta_v(\alpha_k^{(i)},\alpha_k^{(j)}),
\]
where as before $\delta_v$ is the spherical metric in the $v$-adic absolute value $\abs{\cdot}_v$ as normalized above. 

It follows from Corollary \ref{DiscreteCor} applied to the local field $K_v$ that we have
\begin{equation}\label{eqn:main-estimate}
 \liminf_{k\ra\infty} D_v(\al_k)\geq \begin{cases}
    \displaystyle N_v\cdot \frac{q_v^{f_v} \log p_v}{e_v(q_v^{2f_v} - 1)} & \text{if }v\nmid\infty\text{ and }v\in S\\
   \displaystyle  N_v\cdot \frac{7\zeta(3)}{2\pi^2} & \text{if }v\mid\infty\text{ and }v\in S
  \end{cases}
\end{equation}
where again for non-archimedean $v$, $p_v$ denotes the rational prime over which $v$ lies, $f_v = f(L_v/K_v)$ and $e_v=e(L_v/K_v)$ denote the inertial and ramification degrees of $L_v/K_v$. For $v\notin S$, it follows from Mahler's inequality (\cite{Mahler}; see also \cite{BakerAverages}) that 
\begin{equation*}\label{eqn:baker-mahler}
 D_v(\al_k) \geq \begin{cases}\displaystyle
                                                    -N_v \cdot \frac{\log n_k}{n_k-1} & \text{if }v\mid \infty,\\
                                                    0 & \text{otherwise}
                                                   \end{cases},
\end{equation*}
and therefore that 
\begin{equation}\label{eqn:main-estimate-not-in-S}
 \liminf_{k\ra\infty} D_v(\al_k)\geq 0\quad\text{for all}\quad v\notin S.
\end{equation}
Now, by the product formula and our choice of normalizations,
\[
 h(\al_k) = \frac{1}{2} \sum_{v\in M_K} D_v(\al_k),
\]
so if we apply \eqref{eqn:main-estimate} and \eqref{eqn:main-estimate-not-in-S} in the above equation we obtain the desired result.
\end{proof}
Using this result, we are now in a position to prove Corollary \ref{cor:global-equid}. We note that this corollary can trivially be generalized as in Theorem \ref{GeneralGlobalThm} above, however, for simplicity of notation we will prove the simpler statement here.
\begin{proof}[Proof of Corollary \ref{cor:global-equid}]
Using the notation of the previous proof above, we let $D_p(\al_n)$ denote the $p$-adic local discrepancy between the Galois conjugates of $\al_n$. It then follows from our hypothesis, together with equations \eqref{eqn:main-estimate-not-in-S} and \eqref{eqn:main-estimate} from above, that $D_p(\al_n) \ra I(\mu_{\bQ_p})$ for each place $p\in S$. It now follows from Corollary \ref{DiscreteCor} that the probability measures $[\al_n]$ supported equally on the Galois conjugates of $\al_n$ converge weakly to the measures $\mu_{\bQ_p}$ from Theorem \ref{MainThmLocal}.
\end{proof}

\bibliographystyle{abbrv} 
\bibliography{bib}        

\end{document}